\numberwithin{equation}{section}
\definecolor{softgray}{gray}{0.75}
\newcounter{CountAlpha}
\theoremstyle{plain}
\newtheorem{Thm}{Theorem}[section]
\newtheorem{Prop}[Thm]{Proposition}
\theoremstyle{definition}
\newtheorem{Def}[Thm]{Definition}
\newtheorem{Ex}[Thm]{Example}
\theoremstyle{remark}
\newcommand{\IQ}{\mathbf{Q}}
\newcommand{\K}{\mathbf{K}}
\newcommand{\cI}{\mathcal{I}}
\newcommand{\cO}{\mathcal{O}}
\newcommand{\cT}{\mathcal{T}}	
\newcommand{\cX}{\mathcal{X}}
\renewcommand{\u}{{\boldsymbol u}}
\newcommand{\x}{{\boldsymbol x}}
\newcommand{\Spec}{\operatorname{Spec}}
\definecolor{gruen}{rgb}{0, 0.666, 0}
\begin{document}

	\thispagestyle{empty}

	\title{Teissier singularities}

	\author{Hussein Mourtada}
	\thanks{The first named author is partially supported by the ANR-SINTROP
		}
	\address{
		Hussein Mourtada\\
		Universit\'e Paris Cité and Sorbonne Universit\'e, CNRS, IMJ-PRG, F-75013 Paris, France.}
	\email{hussein.mourtada@imj-prg.fr}

	\author{Bernd Schober}
	\address{Bernd Schober\\
	None (Hamburg, Germany)}
	\email{schober.math@gmail.com}

	\keywords{singularities, resolution of singularities, positive characteristic, characteristic polyhedron, overweight deformations.}
	\subjclass[2020]{14B05, 32S05, 14E15}

	\begin{abstract}
	The goal of this note is to introduce Teissier singularities and to explain why they are candidate to play, in positive characteristics, a role for resolution of singularities  which is similar to the role played by quasi-ordinary singularities in characteristic zero. 
	\end{abstract}

	\maketitle

	\section{Introduction}
	\label{Intro}

\noindent Let $\K$ be an algebraically closed field, that we assume for the moment to be of characteristic $0.$ Let $f\in \K[[\x]][z]$ (where $\K[[\x]]=\K[[x_1,\ldots,x_d]])$ be a Weierstrass polynomial, \textit{i.e.},
\[f(\x,z)=z^n+a_1(\x)z^{n-1}+\cdots + a_{n-1}(\x)z+a_{n}(\x),\]
where $a_i(\x) \in \K[[\x]];$ we assume $f(0)=0.$ The polynomial $f$ is said to be quasi-ordinary if its discriminant with respect to $z$ is a unit times a monomial in $\textbf{K}[[x_1,\ldots,x_d]]$ (we refer to this condition as the discriminant condition). The map 
$\pi:\{f=0\}\longrightarrow\textbf{K}^d$, which is the restriction on $\{f=0\}$ of the projection  $\K^{d+1}\longrightarrow\textbf{K}^d$ on the first $d$ coordinates, is finite and  its ramification locus is the zero locus of the discriminant of $f$ with respect to $z.$ When $f$ is quasi-ordinary, we say that $\{f=0\}$ has a quasi-ordinary singularity (with respect to the projection on the first $d$ variables) at the origin $O.$ Quasi-ordinary singularities appear in  Jung's method of resolution of singularities in characteristic $0:$ for any $f\in \textbf{K}[[\x]][z],$ this method is recursive on the dimension and consists in using embedded resolution of singularities in dimension $d-1,$ to transform the discriminant of $f$  into a normal crossing divisor (locally a unit times a monomial). The pull back of  $\{f=0\}$ by the resolution morphism of the discriminant  will then have only quasi-ordinary singularities and the resolution problem is  reduced to the problem of resolution of quasi-ordinary singularities and then to the problem of patching these local resolutions (see \cite{PP} for a survey on Jung's approach).\\

In \cite{MS}, we gave a characterization of quasi-ordinary singularities in terms of an invariant $\kappa(f)$ of $f.$  The construction of $\kappa(f)$ uses Hironaka's characteristic polyhedron and a weighted generalization of this polyhedron applied to successive embeddings of the singularity defined by $f$ in affine spaces of higher dimensions. Note that Hironaka's characteristic polyhedron is a projection of the classical Newton polyhedron (a convex body cooked up from the exponents of the monomials appearing in the expression of $f$), but it has some intrinsic properties thanks to the minimizing process explained in \cite{MS}. The invariant $\kappa(f)$ is a string whose components are all in $\textbf{Q}_{\geq 0}^d,$ except of the last one which is either $-1$ or $\infty.$ The size of $\kappa(f)$ depends on $f.$ In \cite{MS} we prove the following theorem:\\

\begin{Thm}Let $f$ be as above. The singularity $\{f=0\}$ is quasi-ordinary with respect to the projection $\{f=0\}\longrightarrow\textbf{K}^d$ if and only if the last component of  $\kappa(f)$ is $\infty.$ 
\end{Thm}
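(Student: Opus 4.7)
The plan is to prove both directions by translating between the discriminant condition and the combinatorics of the iterated characteristic polyhedra used in the construction of $\kappa(f)$. Recall that $\kappa(f)$ is built by an iterative procedure: at step $k$ one reads off a vertex $v_k \in \IQ_{\geq 0}^d$ from a (weighted) characteristic polyhedron attached to a successive embedding of $\{f=0\}$, and the procedure either continues, terminates with $\BAD$ (witnessing a genuine obstruction), or terminates with $\GOOD$ (all obstructions at every stage vanish). The central point is to identify the obstruction measured at each step with the obstruction to writing the roots of $f$ as Puiseux series whose pairwise differences are unit times monomial -- which, by the Abhyankar--Jung theorem, is precisely the quasi-ordinary condition in characteristic $0$.

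For the direction $(\Rightarrow)$, I would assume $f$ quasi-ordinary and invoke Abhyankar--Jung to write the roots $z_1,\ldots,z_n$ in $\K[[x_1^{1/N},\ldots,x_d^{1/N}]]$ for some $N$, with all $z_i-z_j$ being unit times monomial. The first step is to show that after the minimizing process of \cite{MS}, the vertices of $\Delta(f;\x;z)$ are determined by the characteristic exponents of these roots, and in particular are well-behaved rational points. The second step is to check that when one passes to the successive embeddings used to define $\kappa(f)$, this structure is preserved: each subsequent auxiliary polynomial inherits a quasi-ordinary-type presentation. Consequently no step can detect the $\BAD$ obstruction, and by finiteness of the procedure the last entry is forced to be $\GOOD=\infty$.

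For the direction $(\Leftarrow)$, I would assume the last component of $\kappa(f)$ is $\infty$ and reconstruct a parameterization of the roots. At each step the assumption yields a weighted initial form of $f$ that factors in a controlled way; when the process terminates with $\infty$ these partial factorizations can be assembled into an overweight deformation (in the sense of \cite{MS}) whose roots are explicit fractional-monomial series in $x_1,\ldots,x_d$. The discriminant of $f$ is the product of the squared differences of these roots; each difference is by construction a unit times a monomial, hence so is the discriminant, and $f$ is quasi-ordinary.

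I expect the reverse direction to be the main obstacle. The subtle issue is to ensure that the weighted initial forms produced at successive stages of the construction of $\kappa(f)$ patch together into an honest overweight deformation of $f$, rather than merely a formal matching of leading terms. This requires a careful analysis of how Hironaka's minimizing process interacts with the passage to higher embeddings, and it is here that the weighted generalization of the characteristic polyhedron developed in \cite{MS} does the essential work, by providing the intrinsic control needed to recover the quasi-ordinary structure from the purely numerical datum $\kappa(f)=(\ldots,\infty)$.
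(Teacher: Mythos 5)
The theorem you are asked to prove is stated in the introduction with the phrase ``In \cite{MS} we prove the following theorem''; the present note does not reproduce the proof, it only recalls the construction of $\kappa(f)$ in Section~2. So there is no proof in this paper to compare against directly, and your proposal has to be judged on its own terms, against the machinery the paper does describe.

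On those terms, the overall shape of your argument is the right one and is consistent with the paper's framework: Abhyankar--Jung for the forward direction, and the overweight-deformation / toric presentation (the viewpoint of Teissier and Gonz\'alez P\'erez, which the paper explicitly emphasizes) for the reverse. But as written there are two places where the proposal asserts rather than proves exactly the thing that does the work.

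For $(\Rightarrow)$, you say the vertices of the minimized polyhedra ``are determined by the characteristic exponents,'' and conclude that the procedure never sees the $\BAD$ obstruction. The nontrivial content here is not that the vertices are rational points, but that at every stage the (weighted) characteristic polyhedron has a \emph{single} vertex. That is what the construction requires to continue; if two vertices appear the algorithm outputs $-1$ immediately. For a quasi-ordinary $f$ in characteristic $0$, the single-vertex property at stage $1$ follows from Lipman's total ordering of the characteristic exponents of the Puiseux roots (so there is a unique minimal one), but you need to argue that this persists through the passage to $f_{1}, f_{2}, \ldots$ and through the minimizing changes of variables in $\x, z, u_i$ --- in particular that the auxiliary polynomials $f_i$ again have a quasi-ordinary-type Puiseux description whose exponents are totally ordered. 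Your sketch gestures at this (``each subsequent auxiliary polynomial inherits a quasi-ordinary-type presentation'') but this is precisely the step that must be carried out, and it interacts nontrivially with the minimizing process (one must check that minimization cannot merge information from incomparable exponents into a spurious second vertex, or equivalently that the good coordinates produced by Abhyankar--Jung are already characteristic-polyhedron-minimizing after Tschirnhausen).

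For $(\Leftarrow)$, you correctly flag the reconstruction of a parameterization as the crux, but then finish with ``each difference is by construction a unit times a monomial'' without saying what ``by construction'' means. The data $\kappa(f) = (v_1,\ldots,v_g,\infty)$ gives you the system \eqref{equations}: a chain $u_i = u_{i-1}^{n_i} - c_i \x^{A_i} z^{b_i^{(0)}} \u^{\b_i} + h_i$ with the overweight condition on the $h_i$. What you actually need is: (i) in characteristic $0$ this overweight deformation has $n = n_1\cdots n_g$ Puiseux roots in $\K[[x_1^{1/N},\ldots,x_d^{1/N}]]$, and (ii) for any two such roots $\zeta, \zeta'$, the valuation (in the sense of the weight vector) of $\zeta - \zeta'$ is realized by a monomial, equivalently the difference is a unit times a monomial. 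Point (ii) is exactly where the semigroup $\Gamma = \langle e_1,\ldots,e_d, n_1 v_1, \ldots, n_g v_g\rangle$ and the toric structure of the special fiber enter; without it the discriminant is only seen to have a well-defined Newton polyhedron, not to be a unit times a monomial. This is the content you would have to import from the Gonz\'alez P\'erez-style analysis of quasi-ordinary semigroups, and it is not supplied by the overweight condition alone --- one has to use the strict inequalities $e_1 > e_2 > \cdots > e_g = 1$ and the fact that $n_i > 1$ at each stage (which the paper derives from the minimality of the polyhedron) to control the Galois orbit.

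In summary: the route you chose is almost certainly the intended one and is coherent with everything the paper says about $\kappa(f)$, but the two steps you label as ``by construction'' in $(\Leftarrow)$ and ``inherits a quasi-ordinary-type presentation'' in $(\Rightarrow)$ are exactly where the proof lives, and at present they are placeholders.
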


It is worth mentioning that when $\textbf{K}=\textbf{C},$ we prove that the invariant $\kappa(f)$ is a complete invariant of the topological type of $(\{f=0\},0)\subset (\textbf{C}^{d+1},0).$\\  
On the one hand, while we know how to resolve quasi-ordinary singularities (in characteristic 0), in positive characteristics, the singularities which satisfy the condition on the discriminant can be extremely wicked; e.g., hundreds of pages  of the proof of Cossart-Piltant of resolution in dimension $3$ are dedicated to this type of singularities (see \cite{CParithm} for the arithmetical case). So, in positive characteristics, the reduction of the resolution of singularities problem to the singularities satisfying the discriminant condition cannot be compared with Jung's approach in characteristic $0.$ On the other hand, while in characteristic $0,$ the last component of $\kappa(f)$ being $\infty$ is equivalent to $f$ being quasi-ordinary, in  characteristic $p,$ the invariant $\kappa(f)$ is still meaningful but  the condition on its last component gives rise to a different condition than the one given by the discriminant. 
This motivates us to define the following class of singularities \cite{MS1}:\\

\begin{Def}Let  $\textbf{K}$ be an algebraically closed field of characteristic $p>0.$ Let $f\in \textbf{K}[[x_1,\ldots,x_d]][z]$ be a Weierstrass polynomial satisfying $f(0)=0.$ The hypersurface singularity $(X,0)$ defined by $\{f=0\}$ is a Teissier singularity if the last component of $\kappa(f)$ is $\infty.$
\end{Def}

\noindent We named these singularities "Teissier singularities" because Teissier proved that along an Abhyankar rational valuation, any hypersurface singularity can be embedded in a higher dimensional affine space with a special type of equations that define an "overweight deformation" whose generic fiber is isomorphic to the singularity and whose special fiber is the toric variety associated with the graded algebra of the valuation \cite{T}; this also related to Teissier's viewpoint on resolution of singularities of plane curve singularities after re-embedding them in a higher dimensional affine space \cite{TC}. We expect that for a Teissier singularity defined by a polynomial $f,$
all the valuations which extend rational monomial valuations on $\textbf{K}[[\x]]$ to $\textbf{K}[[x]][z]/(f)$ induce the  "same overweight deformation" and this property characterizes them \cite{CMT, CMT1}. Teissier singularities do not satisfy the discriminant condition in general, and a singularity satisfying the discriminant condition is not  Teissier  in general. But the reason why we think that these singularities give a very good positive characteristics counterpart of quasi-ordinary singularities is the following result: assume for simplicity that $\K=\overline{\mathbf{F}}_p.$\\
  
\begin{Thm}\label{main} A Teissier singularity $(X,0)$ sits in an equisingular  family $\cX$ over $ \mathrm{Spec}(\cO_{\textbf{C}_p}) $ as a special fiber, and the generic fiber of $\cX$ has only quasi-ordinary singularities.
\end{Thm}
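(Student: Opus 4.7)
The plan is to construct the family $\cX$ explicitly by lifting the coefficients of $f$ to characteristic zero via Teichm\"uller representatives, and then deduce quasi-ordinariness of the generic fibre from the Theorem recalled above, once one shows that the invariant $\kappa$ is stable under this lift.

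First I would unpack the Teissier hypothesis via the overweight-deformation picture of \cite{T}. The fact that the last component of $\kappa(f)$ is $\infty$ means that the inductive procedure producing $\kappa(f)$ — Hironaka's characteristic polyhedron followed by its weighted variants at successive re-embeddings — terminates in the ``good'' state. In concrete terms, after passing to suitable re-embedding coordinates one obtains a presentation
\[
f \;=\; f_0 + g,
\]
where $f_0$ is a binomial/toric-type initial form cut out by the vertices at infinity of the (minimized, weighted) polyhedron, and every monomial occurring in $g$ has strictly larger $\w$-weight than $f_0$ for the rational weight $\w$ extracted from $\kappa(f)$. This is precisely an overweight deformation in Teissier's sense.

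Next, I would lift to $\mathcal{O}_{\mathbf{C}_p}$ monomial by monomial: for each coefficient $c \in \overline{\mathbf{F}}_p$ appearing in $f$, let $\widetilde{c} \in \mathcal{O}_{\mathbf{C}_p}$ denote its Teichm\"uller lift, and form $\widetilde{f} \in \mathcal{O}_{\mathbf{C}_p}[[\x]][z]$ with the same support. Put $\cX = \Spec\!\big(\mathcal{O}_{\mathbf{C}_p}[[\x]][z]/(\widetilde{f})\big)$. The special fibre is $(X,0)$ by construction, while the generic fibre lies in $\Spec(\mathbf{C}_p[[\x]][z])$. Equisingularity follows from the fact that the overweight data — the weight $\w$, the initial form $f_0$, and the minimized characteristic polyhedron — are preserved verbatim along $\cX$: the exponent support is unchanged, and Teichm\"uller lifts of nonzero coefficients remain units in $\mathcal{O}_{\mathbf{C}_p}$, so no vertex at infinity can disappear in either fibre. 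Consequently no new vertex can be produced by minimization either, and the polyhedron is constant in the family.

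To finish, I would show that the computation of $\kappa$ commutes with the lift, so that $\kappa(\widetilde{f}) = \kappa(f)$. Since each step of the inductive construction of $\kappa$ depends only on the support of $\widetilde{f}$, on the non-vanishing of certain Teichm\"uller-lifted coefficients, and on translations of the form $z \mapsto z + \varphi(\x)$ that minimize the polyhedron, equality of the two invariants reduces to checking that the minimizing translations used for $f$ admit Teichm\"uller lifts that continue to minimize the polyhedron of $\widetilde{f}$. Once $\kappa(\widetilde{f}) = \kappa(f)$ is established, its last component is $\infty$, and the Theorem in the excerpt (applied in characteristic zero, over $\mathbf{C}_p$) forces the generic fibre to be quasi-ordinary.

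The main obstacle is precisely the last point. In positive characteristic the minimizing translations can involve $p$-th power obstructions: certain equations that are solvable over $\mathbf{C}_p$ need not be solvable over $\overline{\mathbf{F}}_p$, and conversely cancellations occurring mod $p$ may fail to occur in characteristic zero. The heart of the proof is to show that for Teissier singularities the overweight structure rigidifies the situation enough that both polyhedra (and all higher re-embedded weighted polyhedra entering the definition of $\kappa$) coincide after lifting. This should be provable using that the initial form $f_0$ has, by the very definition of Teissier singularity, a ``stable'' shape dictated by the valuation-theoretic picture of \cite{T, CMT, CMT1}, which passes between characteristics through the Teichm\"uller section.
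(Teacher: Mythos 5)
There is a genuine gap, and it sits exactly where you located your ``main obstacle'': the support-preserving Teichm\"uller lift of $f$ is not the right family, and your key step $\kappa(\widetilde{f})=\kappa(f)$ is false in general. The paper's own Example of $f=(z^2-x_1^3x_2^6)^4-x_1^{15}x_2^{30}$ over $\overline{\mathbf{F}}_2$ makes this concrete: in characteristic $2$ one has $f=z^8+x_1^{12}x_2^{24}+x_1^{15}x_2^{30}$, and the initial form at the first vertex is the binomial power $(z^2-x_1^3x_2^6)^4$, so the algorithm introduces $u_1=z^2-x_1^3x_2^6$ and produces the three-step invariant $\kappa(f)=((3/2,3),(15/4,15/2),(63/8,63/4))$. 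If instead you lift coefficient-by-coefficient with the same support, the characteristic-zero initial form is $z^8-c\,x_1^{12}x_2^{24}$, which is \emph{not} a proper power of a binomial (the cross terms that would be needed vanish only mod $2$), so the inductive construction of $\kappa$ takes a completely different course and even the length of the invariant changes. This is the $p$-th-power obstruction you flagged, and it cannot be ``rigidified away'': the cancellations are intrinsic to the Frobenius, not to a choice of minimizing translations. Moreover, even in cases where the naive lift happens to remain quasi-ordinary, you have not addressed equisingularity in the sense the theorem requires, namely a simultaneous \emph{embedded} resolution of the family; constancy of the characteristic polyhedron alone does not give that.

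The paper's route is different and explains why. One does not lift $f$ at all: the Teissier hypothesis yields a re-embedding of $(X,0)$ into $\mathbf{A}^{d+g}$ cut out by $g$ equations of binomial-plus-overweight type (the equations \eqref{equations}), and it is \emph{this} ideal that is lifted verbatim to $\cO[[\x]][z,u_1,\ldots,u_{g-1}]$, where $\cO$ is an absolutely ramified complete DVR with residue field $\K$. Flatness is immediate ($p$ is a nonzerodivisor), the special fiber is $X$, and by \cite{MS} the characteristic-zero generic fiber is quasi-ordinary. Eliminating the $u_i$ in characteristic zero produces a hypersurface whose equation contains the ``ghost monomials'' --- monomials absent from the support of $f$ because their coefficients vanish mod $p$ --- so the correct characteristic-zero partner is genuinely not a support-preserving lift of $f$. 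Equisingularity is then proved by showing the singular loci of all fibers lie in coordinate hyperplanes and that the initial ideals along tropical weight vectors are independent of $t\in\cT$ (Newton non-degeneracy), which yields one toric fan resolving every fiber simultaneously. If you want to salvage your proposal, the fix is to replace your lift of $f$ by the lift of the re-embedded overweight system, at which point the stability of the invariant becomes built into the construction rather than something to be proved against the Frobenius.
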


The field $\textbf{C}_p$ in the theorem is the completion of the algebraic closure of the field of $p-$adic numbers $\mathbf{Q}_p;$ it is of characteristic $0$, hence that the generic fiber is defined over a field of characteristic $0.$ Here, equisingular means that we have an embedded simultaneous resolution of $\cX.$ Although Teissier singularities are complicated in general, we can resolve their singularities thanks to the understanding of the neighboring quasi-ordinary singularities in characteristic zero that we can resolve by toric morphisms, inspired by the work of Gonz\'alez P\'erez \cite{Pedro}. Note also that any quasi-ordinary singularity in characteristic $0$ gives rise to a Teissier singularity.\\
Teissier singularities suggest to work on an analogue in characteristic $p$ of Jung's approach to resolution of singularities, where quasi-ordinary singularities are replaced by Teissier singularities. \\

The goal of this note is to define Teissier singularities and to give an idea of the proof 
of Theorem \ref{main} which is the main result of \cite{MS1}.

\smallskip

\section{The definition of Teissier singularities}

In this section, the field $\K$ is supposed to be algebraically closed, without any restriction on the characteristics. Let  
\[f(\x,z)=z^n+a_1(\x)z^{n-1}+\cdots + a_{n-1}(\x)z+a_{n}(\x) \in \K[[\x]][z],\]
be a Weierstrass polynomial, which will be assumed to be irreducible along the paper. With the projection $\pi$ defined by the algebraic morphism $\K[[\x]]\longrightarrow \K[[\x]][z]/(f),$ we will associate an invariant
$\kappa(\pi)$ which is a vector (it actually looks like a matrix) whose size depends on $f$ and whose components belong to $\IQ_{\geq 0}^d$, except the last one which is either $-1,$ or $\infty.$ Let us begin by explaining what is $\kappa(\pi).$ This follows \cite{MS} (which is defined in characteristic $0$)  but works well also in arbitrary characteristics and is explained in detail in \cite{MS1}.\\

We can rewrite $f$  as follows:

$$f(\x,z)=z^n+\sum_{(A,b)} c_{A,b}\x^Az^b,~ c_{A,b} \in \mathbf{K}.$$
We associate to $f$ and to the variables $\x,z$ a projected polyhedron as follows.

$$\Delta(f,\x,z):=\mbox{Convex hull}\left\{\frac{A}{n-b}+
\mathbf{R}_{\geq 0}^d\mid c_{A,b}\not=0\right\}$$
This convex body is very much related to the Newton polyhedron
which is the convex hull of the set given by $ (A,b) + \mathbf{R}_{\geq 0}^{d+1}.$ Indeed, the projected polyhedron is obtained by multiplying by $1/n$ (a dilation)  the projection of the Newton polyhedron from the point $(0,\ldots,0,n)$ (corresponding to the monomial $z^n$) to  $\mathbf{R}_{\geq 0}^d$ (corresponding to the first $d$ coordinates). Clearly, this object, the projected polyhedron, depends strongly on the coordinates $\x$ and $z$; in order to make it less dependent on the coordinate and capture information about $\pi,$ Hironaka considered the following object, called Hironaka’s characteristic polyhedron \cite{HiroPoly}:
$$\Delta(f,\x):=\bigcap_z \Delta(f,\x,z),$$
where the intersection ranges over all possible change of variables in $ z $.
	It was proved by Hironaka \cite{HiroPoly} that there exist a variable $z$ which satisfies
	$\Delta(f,\x)= \Delta(f,\x,z) $, see also \cite{CPcompl}.
	In \cite{MS}, we extend this notion by additionally varying the choice of $ \x $ appropriately, \textit{i.e.}, we introduce
$$\Delta := \Delta (f ) := \bigcap_{\x,z} \Delta(f,\x,z),$$
where $\x$ and $z$ are (change of variables) defined by automorphisms
of $\K[[\x]][z]$ that preserve $\K[[\x]].$  
We can generalize Hironaka's theorem and show that there exist variables $\x,z$ which satisfy

$$\Delta= \Delta(f,\x,z).$$
Another way to say it, there exists variables $\x,z$ which minimize the polyhedron $\Delta(f,\x,z)$ with respect to inclusion.

\begin{Ex}Let $f=(z-x)^2-x^3= z^2-2xz+x^2-x^3 \in \K[[\x]][z].$
A direct computation gives $\Delta(f,x,z)=[1,+\infty[ \subset \mathbf{R}_{\geq 0}.$ Now if we make the change of variables $y=z-x$ and keep $x$ we have the transform of $f$ that we abusively call $f$ too is $f=y^2-x^3.$ A direct computation gives $\Delta(f,x,y)=[3/2,+\infty[ \subset \mathbf{R}_{\geq 0}.$ So we have $\Delta(f,x,z) \subset \Delta(f,x,y)=[3/2,+\infty[.$  

\end{Ex}
From now on we fix the variables given by the generalization of Hironaka's theorem, those for which the projected polyhedron is minimal with respect to inclusion; the polyhedron is then called the \textbf{characteristic polyhedron}. We are ready to define the first component of the invariant $\kappa(\pi)$ which depends on the following three possibilities:
$$\left \{
    \begin{array}{ll}
      1)~ \mbox{If}~\Delta~ \mbox{has only one vertex}~ v_1~ \mbox{then the first component of}~\kappa(\pi)~\mbox{is}~ v_1. \\
      2) ~\mbox{If}~\Delta~ \mbox{is empty}~ \mbox{then}~\kappa(\pi)=(\infty). \\
    3) ~\mbox{If}~\Delta~ \mbox{has more than one vertex, then}~ \kappa(\pi)=(-1).
    \end{array}
    \right.
     $$
     
     In the second and third cases, we have defined $\kappa(\pi)$ completely. Assume that we are in the first case.
    We define the initial polynomial $In_{v_1}(f)$ of $f$ with respect to $v_1$ by the following:
    \begin{equation}
    In_{v_1}(f):=z^n+\sum_{\frac{A}{n-b}=v_1} c_{A,b}\x^Az^b.
\end{equation}  
Since $\K$ is algebraically closed,  $In_{v_1}(f)$ is a product of a monomial and binomials. Since $f$ is assumed to be irreducible, this implies that  $In_{v_1}(f)$ is a binomial raised at some power (see \cite{BGP,RS}), more precisely:
\begin{equation}
    In_{v_1}(f)=(z^{n_1}-c_{A_1}\x^{A_1})^{e_1}.
\end{equation} 
   Note that we have $n_1>1:$ indeed, if $n_1=1,$  by considering the variables $\x$ and $z-c_{A_1}\x^{A_1}$ (instead of $z$), we obtain that the projected polyhedron with respect to these new variables is strictly included in  the characteristic polyhedron with respect to the variables $\x$ and $z$; this contradicts the assumption that $\x,z$ verify Hironaka's theorem (and its extension in \cite{MS}). We conclude that
\begin{equation}
e_1<n,
\end{equation}  
since we have $n_1e_1=n.$ We introduce a new variable 
$$u_{1,0}:=z^{n_1}-c_{A_1}\x^{A_1}.$$
Let $f_{1,0}$ be the polynomial in $\K[[\x]][z][u_{1,0}]$ obtained from $f$ by replacing $z^{n_1}$ by $u_{1,0}+c_{A_1}\x^{A_1}.$ Hence $f_{1,0} \in \K[[\x]][z]_{<n_1}[u_{1,0}],$ \textit{i.e.}, the degree of $f_{1,0}$ in $z$ is strictly smaller than $n_1.$ Notice that the germ 
$$(X,0)=\{f=0\}\subset \mathrm{Spec} \K[[\x]][z]$$
is isomorphic to the germ defined by  $$\{f_{1,0}=u_{1,0}-(z^{n_1}-c_{A_1}\x^{A_1})=0\}\subset  \mathrm{Spec} \K[[\x]][z][u_{1,0}].$$ 
Write the expansion
$$f_{1,0}=u_{1,0}^{e_1}+\sum *_{A,b,c}\x^Az^bu_{1,0}^c.$$ 

We define 
$$\Delta(f_{1,0},\x,z,u_{1,0})=\mbox{Convex hull}\left\{\frac{A+bv_1}{e_1-c}+\mathbf{R}_{\geq 0}^d\mid *_{A,b,c}\not=0 \right\}.$$
We prove a Hironaka type theorem for this weighted projected polyhedron: 
\begin{Thm}\cite{MS,MS1}
There exists a system of variables $\x,z,u_{1}$ for which the polyhedron $\Delta(f_{1,0},\x,z,u_1)$ is minimal for inclusion and which is compatible with the characteristic polyhedron of $f,$ \textit{i.e}, for which the projected polyhedron of $f$ with respect to $\x,z$ is also the minimal polyhedron $\Delta$ chosen above.   
\end{Thm}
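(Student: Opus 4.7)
The plan is to adapt Hironaka's vertex-preparation argument for the characteristic polyhedron $\Delta(f,\x,z)$ to the weighted polyhedron $\Delta(f_{1,0},\x,z,u_1)$, carried out inside the subclass of coordinate changes that preserve the first-level minimality already chosen. Concretely, fix $\x,z$ realizing $\Delta = \Delta(f,\x,z)$ and form $u_{1,0} = z^{n_1} - c_{A_1}\x^{A_1}$ together with $f_{1,0}$ as in (1.1)--(1.2). The relevant group of transformations then consists of (i) substitutions $u_1 = u_{1,0} - g(\x,z)$ with $g \in \K[[\x]][z]_{<n_1}$, together with (ii) substitutions $z \mapsto z - h(\x)$ and $\x \mapsto \x'$ that do not enlarge $\Delta(f,\x,z)$. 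Type (i) transformations leave $f$ itself unchanged, so their admissibility is automatic; only the type (ii) changes can destroy the first-level minimality, and this is precisely the compatibility constraint of the statement.

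Next I would introduce a notion of well-preparedness for vertices of $\Delta(f_{1,0},\x,z,u_1)$: a vertex $w$ is \emph{solvable} if some admissible substitution $u_1 \mapsto u_1 - \gamma \,\x^{A} z^b$, with $(A+bv_1)/(e_1-c) = w$ for the relevant $u_1$-exponent $c$, strictly removes $w$. A direct analysis of initial forms shows, along the same lines as the binomial analysis yielding (1.2), that $w$ is solvable if and only if $\mathrm{in}_w(f_{1,0})$ is an $(e_1-c)$-th power of a linear polynomial in $u_1$ with coefficients in the monomial algebra $\K[\x^A z^b]$ generated by the vertex $w$. I would then perform iterated elimination vertex by vertex, ordering the vertices by increasing height in a suitable lexicographic total order on $\IQ_{\geq 0}^d$; by the standard Hironaka argument, eliminating a lower vertex does not resurrect a higher one, so the procedure proceeds monotonically.

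Because $\Delta(f_{1,0},\x,z,u_1)$ may have infinitely many vertices, the preparation need not terminate after finitely many steps. To conclude, I would show that the corrections $g_k$ performed at the $k$-th step lie in ever higher powers of the maximal ideal $\mathfrak{m} = (\x,z)$, so that the series $g = \sum_k g_k$ converges in the $\mathfrak{m}$-adic topology of $\K[[\x]][z]_{<n_1}$. The resulting $u_1 = u_{1,0} - g$, together with the limit of the $(\x,z)$-corrections, gives a system in which every vertex of the weighted polyhedron is well-prepared, hence unsolvable, hence the resulting $\Delta(f_{1,0},\x,z,u_1)$ is minimal for inclusion; at the same time the compatibility with $\Delta$ persists because only type (ii) moves preserving $\Delta(f,\x,z)$ were ever used.

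The main obstacle, I expect, is precisely the interaction between the two preparations: an admissible change intended to reduce $\Delta(f_{1,0},\x,z,u_1)$ must not enlarge $\Delta(f,\x,z)$, while the already-achieved minimality of $\Delta(f,\x,z)$ must leave enough freedom to kill every solvable vertex of the weighted polyhedron. The key observation to overcome this is that any substitution in $(\x,z)$ that would enlarge $\Delta(f,\x,z)$ would already contradict the first-step minimality, so the constraint is harmless; meanwhile the type (i) freedom in $u_1$, which is entirely unconstrained by $\Delta(f,\x,z)$, is rich enough to carry out the weighted preparation. Establishing this rigorously — especially tracking which transformations at each stage remain truly available — is the technical heart of the proof and where I would invest the most care.
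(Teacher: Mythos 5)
Your overall strategy coincides with the paper's: prepare the weighted polyhedron vertex by vertex (smallest vertex in a lexicographic order first), eliminate a vertex by a translation $u_1\mapsto u_1-\gamma\,\x^Az^b$ with $b<n_1$, iterate, let the corrections converge $\x$-adically in $\K[[\x]][z]_{<n_1}$, and only ever use $(\x,z)$-changes that do not disturb $\Delta$. The genuine gap is in your solvability test. You assert that ``a direct analysis of initial forms, along the same lines as the binomial analysis yielding the factorization $In_{v_1}(f)=(z^{n_1}-c_{A_1}\x^{A_1})^{e_1}$,'' shows that a vertex $w$ is removable iff $In_w(f_{1,0})$ is a power of a polynomial linear in $u_1$. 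But the argument that produced that first factorization (irreducibility of $f$ together with $\K$ algebraically closed) does not apply to $f_{1,0}$: as the paper stresses, $In_{v_{2,0}}(f_{1,0})$ is in general \emph{not} a product of binomials, and it only becomes a power of a single binomial after a canonical transformation \emph{modulo the relation} $u_{1,0}-(z^{n_1}-c_{A_1}\x^{A_1})=0$ (Proposition 3.3 of \cite{MS}). This reduction modulo the relation is exactly the novelty of the weighted preparation (``the novelty concerns the change of variables involving $u_{1,0}$''); without it the dichotomy $n_{2,0}=1$ (solvable) versus $n_{2,0}>1$ (vertex of the characteristic object) on which your whole induction rests is not even well defined, and the case where the normalized initial form is not a power of one binomial (which the construction records as $-1$) is not addressed.

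A secondary, but real, flaw is your justification of compatibility: you claim that any $(\x,z)$-substitution enlarging $\Delta(f,\x,z)$ ``would already contradict the first-step minimality, so the constraint is harmless.'' Minimality only says that every coordinate choice yields a polyhedron containing $\Delta$; a change that strictly enlarges it contradicts nothing, it is merely forbidden by the compatibility requirement. So the constraint genuinely removes transformations, and the content of the theorem is that the remaining ones — translations of $u_1$ by elements of $\K[[\x]][z]_{<n_1}$, which leave $f$ and hence $\Delta$ untouched, together with $\x$-changes that preserve $\Delta$ — still suffice; the paper secures this through the canonical binomialization above and the convergence argument (bounded $z$-degrees $b_j<n_1$ plus the strict componentwise decrease of the polyhedra force the orders of the exponents $A_j$ to tend to infinity). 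Your convergence step is in the right spirit, but as written the proof of the theorem is incomplete at its technical heart.
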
 Here is an idea of the proof:\\ we take the smallest vertex $v_{2,0}$ of $\Delta(f_{1,0},\x,z,u_{1,0})$ with respect to the lexicographical ordering and we try to eliminate it, \textit{i.e.}, to make a change of variables such that the projected polyhedron with respect to the new variables is smaller (with respect to inclusion) and does not have $v_{2,0}$ as vertex. The novelty concerns the change of variables involving $u_{1,0}.$ We consider the initial form of $f_{1,0}$  with respect to $v_{2,0}:$

\begin{equation}
    In_{v_2}(f_{1,0}):=u_{1,0}^{e_1}+\sum_{\frac{A+bv_{1}}{e_1-c}=v_{2,0}} *_{A,b,c}\x^Az^bu_{1,0}^c.
\end{equation} 

In general, this initial form is not a product of binomials, but, by Proposition 3.3 in \cite{MS}, we can canonically transform it into a  product of power of binomials modulo the equation $$u_{1,0}-(z^{n_1}-c_{A_1}\x^{A_1})=0.$$ We assume that it is a power of only one binomial, otherwise we put $v_2=-1.$ Write  

\begin{equation}\label{binomial}
 In_{v_{2,0}}(f_{1,0})=(u_{1,0}^{n_{2,0}}-*_{A_2,b_2,0}\x^{A_{2,0}}z^{b_{2,0}})^{e_{2,0}}.
 \end{equation}
 Here, there are no conditions which forbid $n_{2,0}$ from being equal to $1.$ If $n_{2,0}$ is not equal to $1$, we cannot eliminate the vertex $v_{2,0}$ by a change of variables involving $u_{1,0}$ and this $v_{2,0}$ will be simply called $v_2$ and we put $u_1=u_{1,0}.$ Assume, on the contrary, that $n_{2,0}=1;$ making the change of variables
  $$ u_{1,1}=u_{1,0}^{n_{2,0}}-*_{A_2,b_2,0}\x^{A_{2,0}}z^{b_{2,0}}=u_{1,0}-*_{A_2,b_2,0}\x^{A_{2,0}}z^{b_{2,0}},$$ 
  
 we eliminate $v_{2,0},$ \textit{i.e.}, $v_{2,0}$ is no longer a vertex of $\Delta(f_{1,1},\x,z,u_{1,1});$ here $f_{1,1}$ is obtained from $f_{1,0}$ by substituting      
$u_{1,0}$ by $u_{1,1}+*_{A_2,b_2,0}\x^{A_{2,0}}z^{b_{2,0}}.$ Consider $v_{2,1}$ to be the smallest vertex of $\Delta(f_{1,1},\x,z,u_{1,1})$ with respect to the lexicographical ordering; note that by construction we have a strict inclusion 
\begin{equation}\label{inclusion}
\Delta(f_{1,1},\x,z,u_{1,1})\subset \Delta(f_{1,0},\x,z,u_{1,0})
\end{equation} 
with respect to the product ordering (component by component). Again, we consider 
$In_{v_{2,1}}(f_{1,1})$ and we determine as in \eqref{binomial} $n_{2,1};$
if $n_{2,1} \not= 1$ then we cannot eliminate the vertex $v_{2,1}$ by a change of variables involving $u_{1,1}$ and this $v_{2,1}$ will be simply called $v_2$ and we put $u_1=u_{1,1}.$ If $n_{2,1}=1$ we make the change of variables  

$$u_{1,2}=u_{1,1}-*_{A_2,b_2,1}\x^{A_{2,1}}z^{b_{2,1}}=$$

$$u_{1,0}-*_{A_2,b_2,0}\x^{A_{2,0}}z^{b_{2,0}}-*_{A_2,b_2,1}\x^{A_{2,1}}z^{b_{2,1}}=$$

\begin{equation}
z^{n_1}-c_{A_1}\x^{A_1}-*_{A_2,b_2,0}\x^{A_{2,0}}z^{b_{2,0}}-*_{A_2,b_2,1}\x^{A_{2,1}}z^{b_{2,1}}
\end{equation}

where $A_{2,1}+b_{2,1}v_{1}=v_{2,1}.$
We continue this algorithm, and either at some time we will find some integer number $i$ for which $n_{2,i}>1$ or we construct a series 
\begin{equation}\label{converges}
u_{1,\infty}=z^{n_1}-c_{A_1}X^{A_1}-*_{A_2,b_2,0}\x^{A_{2,0}}z^{b_{2,0}}-*_{A_2,b_2,1}\x^{A_{2,1}}z^{b_{2,1}}-\cdots 
\end{equation}
Now recall that all the $b_j$ satisfy $b_j<n_1$ and that we have the strict inclusion \eqref{inclusion}: this tells us that the sum of the components of $A_j$ when $j$ tends to $\infty$, tends to $\infty$ and  that $u_{1,\infty}$ belongs to $\K[[\x]][z]_{<n_1},$ \textit{i.e.}, 
the coefficients of the powers of $z$ converges in $\K[[\x]]$ for the $\x$-adic topology generated by the powers of the maximal ideal. We put $u_1:=u_{1,\infty}$ and $f_1=f_{1,\infty}.$ Then the smallest vertex $v_{2}$ of $\Delta(f_{1},\x,z,u_{1})$ with respect to the lexicographical ordering cannot be eliminated by a change of variables involving $u_1.$ Continuing the same game with the other vertices of $\Delta(f_{1},\x,z,u_{1})$ and making use of the allowed changes of variables which involve the $\x$ only without disturbing the first characteristic polyhedron $\Delta,$ we end up by obtaining a weighted characteristic polyhedron  $\Delta_1.$   

We are ready to define the second component of the invariant $\kappa(\pi)$ which depends on the following three possibilities:
$$\left \{
    \begin{array}{ll}
      1)~ \mbox{If}~\Delta_1~ \mbox{has only one vertex}~ v_2~ \mbox{then the second component of}~\kappa(\pi)~\mbox{is}~ v_2. \\
      2) ~\mbox{If}~\Delta_1~ \mbox{is empty}~ \mbox{then}~\kappa(\pi)=(v_1,\infty). \\
    3) ~\mbox{If}~\Delta_1~ \mbox{has more than one vertex, then}~ \kappa(\pi)=(v_1,-1).
    \end{array}
    \right.
     $$
If we are in the first situation, \textit{i.e.}, $\Delta_1$ has only one vertex $v_2,$ the choice of $u_1$ insures that in the equation
 \begin{equation}
 In_{v_{2}}(f_{1})=(u_{1}^{n_{2}}-c_{A_2,b_2^{(0)}}\x^{A_2}z^{b_2^{(0)}})^{e_{2}},
 \end{equation}
 $n_2>1$ and hence $e_2<e_1<e_0:=n.$ Similarly, we continue the construction as above, we introduce a new variable  $$u_{2,0}:=u_{1}^{n_{2}}-c_{A_2,b_2^{(0)}}\x^{A_2}z^{b_2^{(0)}}$$
 and the polynomial $f_{2,0}\in\K[[\x]][z]_{<n_1}[u_1]_{<n_2}[u_{2,0}],$ obtained from $f_1$ by substituting $u_1^{n_2}$ by $u_{2,0}+c_{A_2,b_2^{(0)}}\x^{A_2}z^{b_2^{(0)}}.$ The procedure, stops in one of two cases: either we get to a situation where our weighted polyhedron has more than one vertex; or we construct a strictly increasing sequence 
 $$n>e_1>e_2>\cdots,$$
 which should end by $e_{g}=1,$ for some $g\in \mathbb{N}$ (it is the case where the weighted polyhedron is empty).
We are now ready to define Teissier singularities.

\begin{Def}
Let  $\textbf{K}$ be an algebraically closed field. Let $f\in \textbf{K}[[\x]][z]$ be a Weierstrass polynomial satisfying $f(0)=0.$ The hypersurface singularity $(X,0)$ defined by  $f=0$, equipped with the map $(X,0)\longrightarrow (\mathbb{A}^d,0)$ given by $(\x,z)\mapsto \x$, is a \textbf{Teissier singularity} if the last component of $\kappa(\pi)$ is $\infty.$
\end{Def}
It follows from \cite{MS} that in characteristic $0,$ $(X,0)\longrightarrow (\mathbb{A}^d,0)$ is quasi-ordinary if and only if it is Teissier. So it is in positive characteristic that these singularities are essentially new. Indeed let us consider the following examples in characteristic $2.$

\begin{Ex}\label{first example}
\begin{enumerate}
\item Let $\K=\overline{\textbf{F}}_2$ and $f=z^2-x_1x_2z-x_1^3x_2-x_1x_2^3\in \K[[x_1,x_2]][z].$ A 
direct computation allows the reader to see that the projected 
polyhedron $\Delta(f,\x,z)$ is minimal and that it has two vertices. So the singularity $(X,0)=\{f=0\}\longrightarrow (\textbf{A}^2,0)$ 
is not Teissier since we have $\kappa(\pi)=(-1).$ At the same time, a direct 
calculation of the discriminant $\mathcal{D}_z(f)$ of $f$ with respect to $z$ gives $\mathcal{D}_z(f)=(x_1x_2)^2.$ So $(X,0)$ is
quasi-ordinary but not Teissier.
\item Let $\K=\overline{\textbf{F}}_2$ and $f=z^2-x^3\in \K[[x_1,x_2]][z].$
We have $\kappa(\pi)=(\frac{3}{2},\infty);$ so  $(X,0)=\{f=0\}\longrightarrow (\textbf{A}^1,0)$ is Teissier. But $(X,0)=\{f=0\}\longrightarrow (\textbf{A}^1,0)$ is not quasi-ordinary since $\mathcal{D}_z(f)=0$ and the extension defined by $f$ is purely inseparable.   
\end{enumerate}
\end{Ex}
\section{Simultaneous embedded resolution} 
From now on, we assume that the characteristic $p$ of $\K$ is positive.
As we just saw, in positive characteristics, Teissier singularities and quasi-ordinary singularities can be very different. But as we will show below, they are comparable from a deformation 
theoretic viewpoint. First, notice that it follows from the definition of a Teissier singularity $(X,0)\longrightarrow (\mathbf{A}^d,0)$ that $(X,0)$ is isomorphic to a subvariety of $$\mathbf{A}^{d+g}=\mathrm{Spec}\K[[\x]][z,u_1,\ldots,u_{g-1}]$$ defined by an ideal $I\subset \K[[\x]][z,u_1,\ldots,u_{g-1}]$ which is generated by the following $g$ functions:

\begin{equation}
	\label{equations}
\left\{ 
\begin{array}{l}
	u_1-(z^{n_1}-c_{A_1}\x^{A_1}+h_1(\x,z,u_1)),  \\
	u_2-(u_1^{n_2} -c_{A_2,b_2^{(0)}}\x^{A_2}z^{b_2^{(0)}}+h_2(\x,z,u_1,u_2)),\\
	\vdots \\
	u_{g-1}-(u_{g-2}^{n_{g-1}}-c_{A_{g-1},b_{g-1}^{(0)},\bf{b_{g-1}}}\x^{A_{g-1}}z^{b_{g-1}^{(0)}}\u^{\bf{b_{g-1}}}+h_{g-1}(\x,z,u_1,\ldots,u_{g-2}))
	\\
	u_{g-1}^{n_g}-c_{A_{g},b_{g}^{(0)},\bf{b_{g}}}\x^{A_{g}}z^{b_{g}^{(0)}}\u^{\bf{b_{g}}}+h_{g}(\x,z,u_1,\ldots,u_{g-1}), 
\end{array} 
\right.
\end{equation}

where for $i=3,\ldots,g ,\bf{b_{i}}$ is a vector having $i$ components and $\u^{\bf{b_{i}}}$ is the monomial 
$$u_1^{b_{i}^{(1)}}\cdots u_{i}^{b_{i}^{(i)}}$$ and where if we give the variable $x_i$ the weight $e_i:=(0,\ldots,0,i,0,\ldots,0)$ (the    $i$'th vector of a standard basis of $\mathbf{R}^d),$ for $i=1,\ldots,d,$ the variable $z$ the weight $v_1$ and the variable $u_i$ the weight $v_{i+1}$ for $i=1,\ldots,g-1,$ the weights of the monomials appearing in the power series $h_i,i=1,\ldots,g$ are larger with respect to the product ordering than
 $$n_iv_{i}=\mathrm{weight}(u_{i-1}^{n_i})=\mathrm{weight}(
\x^{A_i} z^{b_{i}^{(0)}}\u^{\bf{b_{i}}}).$$
 This is an overweight deformation of the ideal generated by the initial binomials.


Since $ \K $ is algebraically closed, it is perfect in particular.
Therefore, by \cite{LocalFields}, [II, \S 5, Theorem 3],
there exists a complete discrete valuation ring $ \cO $ of mixed characteristics
which is absolutely ramified (i.e., $ p $ is a local uniformizer of $ \cO$)
and has $ \K $ as its residue field. 
Set 
\[
	\cT := \Spec (\cO).
\]
Let $ 0 \in \cT $ be the closed point and $ \eta \in \cT $ be the generic point; note that $\cT$ is of dimension $1$ and should thought as a curve.

\begin{Prop}
	\label{Prop:family_for_characteristic_change}
	There exist a scheme $ \cX $ and a morphism $ \psi \colon \cX \to \cT $ such that  
	$ \pi $ is flat,	
	$ \cX_0 := \psi^{-1} (0) \cong X $ and $ \cX_\eta := \psi^{-1} (\eta) $ is isomorphic to a quasi-ordinary hypersurface singularity. 
\end{Prop}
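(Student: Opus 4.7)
The plan is to obtain $\cX$ by lifting the defining equations \eqref{equations} to $R:=\cO[[\x]][z,u_1,\ldots,u_{g-1}]$. Since the reduction $\cO\twoheadrightarrow \K$ is surjective, fix a set-theoretic section $s\colon\K\to\cO$ and apply it coefficient-by-coefficient to every power series appearing in \eqref{equations}, obtaining lifted generators $\widetilde F_1,\ldots,\widetilde F_g\in R$. Set $\widetilde I:=(\widetilde F_1,\ldots,\widetilde F_g)$, $\cX:=\Spec(R/\widetilde I)$, and let $\psi\colon\cX\to\cT$ be induced by $\cO\hookrightarrow R$. The isomorphism $\cX_0\cong X$ is then automatic from reducing modulo $p$.

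For flatness I would argue with a regular sequence. The generators $F_1,\ldots,F_g$ of the ideal of $X$ in $\K[[\x]][z,u_1,\ldots,u_{g-1}]$ already form a regular sequence: by the overweight form of \eqref{equations}, each $F_i$ with $i<g$ is linear in $u_i$ with unit leading coefficient modulo strictly higher-weight terms, so they iteratively eliminate the $u_i$ and identify the quotient with $\K[[\x]][z]$; under this identification $F_g$ becomes the Weierstrass polynomial $f$, which is a non-zerodivisor. Consequently $\widetilde F_1,\ldots,\widetilde F_g,p$ is a regular sequence in the Cohen--Macaulay local ring $R$ (it lifts a regular sequence of the right codimension in $R/pR$), and permuting $p$ to the right shows that $p$ is a non-zerodivisor on $R/\widetilde I$, which is flatness of $\psi$ over the DVR $\cO$.

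The heart of the proposition is the identification of the generic fiber. Running the same elimination of the $u_i$ over $R$ produces a Weierstrass polynomial $\widetilde f\in\cO[[\x]][z]$ lifting $f$, and base change to the generic point $\eta$ yields a Weierstrass polynomial $\widetilde f_\eta$ over a local ring of characteristic zero. I claim that $\theinvariant(\widetilde f_\eta)$ still ends in $\GOOD$; combined with the characterization of quasi-ordinariness in characteristic zero by the last component of $\theinvariant$ proved in \cite{MS}, this yields that $\cX_\eta$ is a quasi-ordinary hypersurface singularity. The claim holds because the combinatorial data underlying $\theinvariant$ are transferred unchanged by the lift: the leading binomial coefficients $c_{A_1},c_{A_2,b_2^{(0)}},\ldots$ lift to units of $\cO$; the overweight inequalities on exponents in \eqref{equations} are characteristic-insensitive; and the conditions $n_i>1$ force each initial form to remain a nontrivial binomial power, so that the same vertices $v_1,\ldots,v_g$ arise and the algorithm of Section~2 terminates at the same stage, with $\GOOD$.

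The step I expect to require the most care is verifying that the variables $\x,z,u_1,\ldots,u_{g-1}$ continue to minimize the characteristic and weighted polyhedra of $\widetilde f_\eta$ in characteristic zero, where in principle more changes of variables are available. The key observation is a rigidity argument: any admissible change of variables in characteristic zero that strictly shrank one of the polyhedra of $\widetilde f_\eta$ would, upon reduction modulo $p$, furnish a strict shrinkage of the corresponding polyhedron of $f$ (the relevant leading coefficients remain nonzero modulo $p$ since they are units of $\cO$), contradicting the minimality built into the definition of the Teissier singularity $X$. Once this rigidity is in place, the equality $\theinvariant(\widetilde f_\eta)=\theinvariant(f)$ and hence the quasi-ordinariness of $\cX_\eta$ follow.
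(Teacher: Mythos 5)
Your proof follows the same overall strategy as the paper's: lift the explicit generators from \eqref{equations} to $\cO[[\x]][z,u_1,\ldots,u_{g-1}]$ coefficient-by-coefficient, verify flatness by showing $p$ is not a zero divisor in the quotient, and identify the generic fiber as quasi-ordinary via the characterization by the last entry of $\theinvariant$ from \cite{MS}. The paper's proof is much terser --- it simply asserts that $p$ is a non-zerodivisor in $R$ and cites \cite{MS} for the generic fiber --- so your elaboration of the flatness step via the regular sequence $\widetilde F_1,\ldots,\widetilde F_g,p$ and its permutability in a Cohen--Macaulay local ring is a genuine and correct filling-in of a detail the paper leaves implicit.

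The one place where your sketch is optimistic is the rigidity argument for the generic fiber, which is exactly the part the paper delegates to \cite{MS}. You claim that any admissible change of variables over $\operatorname{Frac}(\cO)$ that strictly shrinks a (weighted) polyhedron of $\widetilde f_\eta$ would reduce modulo $p$ to a shrinkage for $f$. But a coordinate change over the fraction field need not have coefficients in $\cO$, and if any coefficient had negative valuation it could not be reduced modulo $p$, so the contradiction does not come for free. A second subtlety you gloss over: after eliminating the $u_i$'s over $\cO$, the Weierstrass polynomial $\widetilde f_\eta$ contains \emph{ghost monomials} (terms whose coefficients vanish mod $p$, as in Example~\ref{purinsep}), so a priori $\Delta(\widetilde f_\eta,\x,z)$ and its weighted successors could acquire new vertices absent from the char-$p$ picture, breaking the claimed vertex-by-vertex agreement $\theinvariant(\widetilde f_\eta)=\theinvariant(f)$. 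Both issues are resolvable --- the coefficients forced by vertex-elimination are lifts of nonzero residues and hence lie in $\cO^\times$, and the overweight structure places ghost monomials strictly above each characteristic polyhedron --- but this is precisely the real content behind the paper's citation of \cite{MS}, and your proof should make these two points explicit rather than treating them as immediate.
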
 

\begin{proof}
	We lift $ I \subset k[[\x]][z,u_{1},\ldots,u_{g-1}] $ to
	\[
		\cI := I \cdot \cO[[\x]][z,\u_{1},\ldots,u_{g-1}].
	\]
	Let $ \cX $ be the scheme defined by $ \cI $,
	\[
		\cX := \Spec (R),
		\ \ \ 
		\mbox{ where } 
		R := \cO[[\x]][z,\u_{1},\ldots,u_{g-1}]/ \cI,
	\] 
	and let $ \psi \colon \cX \to \cT $ be the natural projection.
	Since $ p $ is not a zero divisor in $ R $,
	the morphism $ \psi $ is flat. 
	%
	%
	%
	By construction, we have $ \cX_0=\psi^{-1}(0) \cong X $ 
	and moreover, by \cite{MS}, the generic fiber $ \cX_\eta:=\psi^{-1}(\eta) $ is isomorphic to a quasi-ordinary hypersurface singularity. 
\end{proof}

In other terms, $\cI$ is the ideal generated by the elements given in \eqref{equations}; it is a very particular lift of the defining ideal of $I.$ The main goal of this article is to announce the following result. 

\begin{Thm}\label{main1}The family $\cX\longrightarrow \cT$  admits a simultaneous embedded resolution of singularities.
\end{Thm}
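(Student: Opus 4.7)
The plan is to construct an explicit simultaneous embedded resolution as a single toric modification of $\cT \times \mathbb{A}^{d+g}$ determined by the weight vectors $v_1,\ldots,v_g$ appearing in $\kappa(\pi)$. The guiding principle is that by construction $\cI$ is an overweight deformation of the binomial ideal $\cI_{\mathrm{in}}$ obtained by dropping the perturbations $h_i$ from \eqref{equations}, so both the generic and the special fiber of $\cX \to \cT$ are overweight deformations of the \emph{same} toric scheme over $\cT$; resolving this toric scheme torically should therefore resolve both fibers simultaneously.

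First, I analyze $\cI_{\mathrm{in}}$ alone. Assigning weights $e_i$ to $x_i$, $v_1$ to $z$, and $v_{i+1}$ to $u_i$, the subscheme of $\cT \times \mathbb{A}^{d+g}$ cut out by $\cI_{\mathrm{in}}$ is an affine toric scheme over $\cT$ of relative dimension $d$, independent of the residue characteristic. Following González Pérez \cite{Pedro}, I would produce a regular subdivision $\Sigma$ of the rational cone spanned by $v_1,\ldots,v_g$ that provides an embedded toric resolution of this toric model. Because $\Sigma$ is purely combinatorial it is defined uniformly over $\cO$, and yields a toric modification $\Pi \colon \widetilde{\cY} \to \cT \times \mathbb{A}^{d+g}$ whose charts are given by monomial substitutions with integer exponents that do not involve $p$.

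Second, I take $\widetilde{\cX}$ to be the strict transform of $\cX$ under $\Pi$ and verify that $\widetilde{\cX} \to \cT$ is smooth with exceptional divisor of relative normal crossings. The overweight condition is what makes this work: in each chart, every $h_i$ is divisible by the exceptional monomial to a strictly higher order than its leading binomial, so after factoring out this monomial the pulled-back equations become unit-plus-higher-order, and an implicit function argument applied to the triangular system \eqref{equations} produces smoothness along each toric orbit. Restricting fiber by fiber, this recovers the González Pérez resolution of the quasi-ordinary singularity $\cX_\eta$ on the generic side, and gives the sought resolution of $\cX_0 \cong X$ on the special side.

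The hard part will be the uniform smoothness of $\widetilde{\cX} \to \cT$ in positive residue characteristic, where classical Jacobian or versality shortcuts are not available. One has to argue purely in terms of orders of vanishing along the toric strata, and this ultimately depends on the fact that the last component of $\kappa(\pi)$ is $\infty$: translated into each chart, this rules out the emergence of residual vertices in the weighted characteristic polyhedra that could obstruct termination of the toric algorithm. Since the weighted characteristic polyhedron is an intrinsic invariant of the overweight deformation and is the same for every fiber of $\cX \to \cT$, the termination of the algorithm propagates from $\cX_\eta$ to $\cX_0$, and the smoothness of $\widetilde{\cX}$ along its special fiber follows from the smoothness along its generic fiber.
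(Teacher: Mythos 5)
Your proposal captures the right overall philosophy (toric resolution uniform over $\cT$, driven by the overweight-deformation structure of $\cI$), but there are two substantive gaps relative to what the paper actually does.

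First, you never address the question of \emph{where} the singular locus of $\cX_t$ sits inside $Z_t$. A toric modification is an isomorphism over the open torus, so the strategy of resolving by a single toric map can only possibly work if one first shows that $\Sing(\cX_t)$ is contained in the union of the coordinate hyperplanes $\{x_1=0\},\ldots,\{x_d=0\},\{z=0\},\{u_1=0\},\ldots,\{u_{g-1}=0\}$ for every $t\in\cT$. The paper treats this as its first main step, handled by the Jacobian criterion when $g=1$ and by an induction on $g$ requiring Teissier singularities on toric ambient spaces (in the spirit of \cite{Pedro2}) when $g>1$. Without this, the ``unit-plus-higher-order'' chart analysis you describe says nothing about singular points of $\cX_t$ lying in the dense torus, and the argument cannot close.

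Second, your description of the fan and of the non-degeneracy argument is off in a way that matters. You take $\Sigma$ to be a regular subdivision of ``the cone spanned by $v_1,\ldots,v_g$'', but those vectors live in $\IQ_{\geq0}^d$ and the relevant fan in the paper is a subdivision of the full orthant $\IR_{\geq0}^{d+g}$, uniform in $t$. More importantly, the paper's second step is not just to control the initial \emph{binomial} ideal of \eqref{equations}: it establishes Newton non-degeneracy of $\cX_t\subset Z_t$ and shows that, for \emph{every} weight vector $\omega\in\IR_{\geq0}^{d+g}$ (in particular along the local tropical variety), the initial ideal of $\cI$ is equisingular over $\cT$. This is the mechanism that makes the toric resolution work simultaneously for every fiber, and it is not a consequence of the characteristic polyhedron ``being the same for every fiber''. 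Indeed the polyhedron alone does not record the initial ideals at all cones of the fan, and your closing claim that termination ``propagates from $\cX_\eta$ to $\cX_0$'' is precisely the assertion one must prove, not a consequence one can invoke; characteristic-$0$ resolvability by itself does not transfer to the special fiber, which is the whole difficulty of positive characteristic.
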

More precisely, let $Z=\mathrm{Spec}\cO[[\x]][z,u_1,\ldots,u_{g-1}]$ and let $\overline{\psi}:Z\to \cT$ be the natural projection that extends $\psi;$ let $Z_t:={\overline{\psi}}^{-1}(t).$ There exists a birational proper morphism 
$\mu:\widetilde{Z}\longrightarrow Z,$ such that  $\mu_{\mid\mu^{-1}(Z_t)}$ for $t\in \cT$ is a birational toric morphism defined by a fan $\Sigma$ which is a subdivision of $\mathbf{R}_{\geq 0}^{d+g}$ and which is independent of $t\in \cT.$ 
This map $\mu$ satisfies that  $\mu_{\mid\mu^{-1}(Z_t)}$ is an embedded resolution of $\cX_t;$ moreover, the exceptional divisor $E$ of $\mu$ is a relative normal crossing divisor over $\cT;$ in particular $E$ is a flat family over $\cT.$ This means that a Teissier singularity is equisingular in the sense of the theorem to a carefully chosen quasi-ordinary singularity 
whose defining equation contains monomials that one cannot see in characteristic $p$ (over $t=0$) but which are very important to resolve the Teissier singularities. We call these monomials the \textbf{ghost monomials}, since they disappear in characteristic $p$ (over $t=0$) but appear in characteristic $0$ (over $t=\eta$).

\begin{Ex}\label{purinsep}
Let $p=2$ and let $\K=\overline{\bf{F}}_2.$ Then $\mathcal{O}$ is the valuation ring of the $p-$adic complex number $\mathbf{C}_p.$
Let $f=(z^2-x_1^3x_2^6)^4-x_1^{15}x_2^{30} \in \K[[x_1,x_2]][z].$
The singularity 
$$(X,0)=\{f=0\}\to \mathrm{Spec}\K[[x_1,x_2]]$$ is Teissier and we have $$\kappa(f)=((3/2,3),(15/4,15/2),(63/8,63/4)).$$ Indeed, after computing the first characteristic polyhedron, we introduce the new variable 
\begin{equation}\label{ou1}
u_1=z^2-x_1^3x_2^6.
\end{equation}
we find then, after substituting $z^2$ by $u_1+x_1^3x_2^6$ that
$$f_1=u_1^4-x_1^{15}x_2^{30}$$
$$= u_1^4-x_1^{12}x_2^{24}x_1^3x_2^6$$
which is equal modulo the equation (\ref{ou1}) to
$$ u_1^4-x_1^{12}x_2^{24}(z^2+u_1)=(u_1^2-x_1^6x_2^{12}z)^2-x_1^{12}x_2^{24}u_1.$$

We introduce the variable $$u_2=u_1^2-x_1^6x_2^{12}z.$$
This gives us that our singularity $(X,0)$ is isomorphic to the singularity defined in $\mathrm{Spec}\K[[x_1,x_2]][z,u_1,u_2]$ by
the ideal $I$ generated by:
$$
\begin{array}{ll}
    u_1-(z^2-x_1^3x_2^6),\\
     u_2-(u_1^2-x_1^6x_2^{12}z),\\
u_2^2+x_1^{12}x_2^{24}u_1.
     \end{array} 
$$

This also allows us to rewrite the equation $f=0$ as follows:
$$((z^2-x_1^3x_2^6)^2-x_1^6x_2^{12}z)^2+x_1^{12}x_2^{24}(z^2-x_1^3x_2^6)=0.$$
Notice that the coefficient of the monomial $x_1^{12}x_2^{24}z^2$ is zero in characteristic $2$ but non-zero in characteristic $0.$ The monomial $x_1^{12}x_2^{24}z^2$ is a ghost monomial.
\end{Ex}   

There are two important steps in the proof of Theorem \ref{main1}.\\

In first step, one needs to prove that the singular locus of $\cX_t\subset Z_t$ is included in the union of the coordinate hyperplanes of $Z_t,$ \textit{i.e.,} the hyperplanes defined by
the vanishing of one of the coordinates $x_1,\ldots,x_d,z,u_1,\ldots,u_{g-1}$. For $g=1,$ this is proved by a direct application of the Jacobian criterion. For $g>1,$ this is proved by induction on $g$ and for that we had to introduce Teissier singularities defined on toric varieties; this generalizes in some sense \cite{Pedro2} to positive characteristics.\\

The second steps is to prove, along the ideas of \cite{Tevelev,Aroca1,AGM} that $\cX_t\subset Z_t$ is a Newton 
non-degenerate singularity for any $t \in \cT$ and that moreover for any vector $\omega \in \mathbf{R}_{\geq 0}^{d+g},$ the initial ideal of $\cI$ is equisingular over $\cT.$ For instance, in the example \eqref{purinsep}, if we take a vector $\omega$ whose components are strictly positive real numbers and which lies in the local tropical variety of $(\cX_t,0)\subset \mathbf{A}^{d+g}$ (these are the important types of vectors for studying non-degeneracy: indeed for the other vectors, the initial ideals contain monomials, so the varieties they define, in particular their singular loci, are included in coordinate hyperplanes), then the initial ideal of $\cI_t$  with respect to the weight determined by $\omega$ is generated by:
$$
\begin{array}{ll}
    z^2-x_1^3x_2^6,\\
     u_1^2-x^6x_2^{12}z,\\
u_2^2+x_1^{12}x_2^{24}u_1.
     \end{array} 
$$
The variety defined by this initial ideal is toric and its singular locus is included in the union of the coordinate hyperplanes; moreover one notices that this ideal is "independent" of $t,$ even though depending on $t$, the initial ideal is included in a polynomial ring with the same variable but over different fields (recall here that the closed point of $\cT$ is defined by the ideal generated by $(p)).$ The fact the the initial ideal with respect to such a vector $\omega$ is "independent" of $t$ will imply that we can construct a simultaneous "toric" embedded resolution of $\cX\subset \mathbf{A}_\cO^{d+g}.$

\bibliographystyle{alpha}
\bibliography{TeissierBib}

\end{document}